
\documentclass[11pt]{article}
\usepackage{amsmath}
\usepackage{amssymb}
\usepackage{amsfonts}
\usepackage{graphicx}

\setcounter{MaxMatrixCols}{10}

\newtheorem{theorem}{Theorem}

\newtheorem{corollary}[theorem]{Corollary}

\newtheorem{definition}[theorem]{Definition}
\newtheorem{example}[theorem]{Example}

\newenvironment{proof}[1][Proof]{\noindent \textbf{#1.} }{\  \rule{0.5em}{0.5em}}

\begin{document}

\title{A new class of curves generalizing helix and rectifying curves}
\author{\textsc{Fouzi Hathout} \\
Department of Mathematics, University\ of \textsc{Sa\"{\i}da},\\
20000 \textsc{Sa\"{\i}da}, \textsc{Algeria}.\\
Email: fouzi.hathout@univ-saida.dz, f.hathout@gmail.com}
\date{}
\maketitle

\begin{abstract}
In this paper, we introduce a new class of curves $\alpha $ called a $f$%
-rectifying curves, which its $f$-position vector defined by $\alpha
_{f}(s)=\int f(s)T(s)ds$ always lie in the rectifying plane of $\alpha $,
where $f$\ is an integrable function and $T$ is the speed curve of $\alpha $%
. In particular case, when the function $f\equiv 0$ or constant, the class
of $f$-rectifying curves are helix or rectifying curves, respectively. The
classification and the characterization of such curves in terms of their
curvature and the torsion functions are given with a physical
interpretation. We close this study with some examples.

\textbf{Key words:} $f$-rectifying; $f$-position vector; helix; rectifying.

\textbf{AMS Subject Classification}{\small : 53A04, 53A17}
\end{abstract}

\section{Introduction}

Let $\mathbb{E}^{3}$ be an Euclidean 3-space, we denote by $<x,y>$ the
standard inner product for any arbitrary vectors $x$ and $y$ in $\mathbb{E}%
^{3}$. The norm of $x$ is denoted by $\left \vert x\right \vert =\sqrt{<x,x>}
$.

Let $\alpha :I\subset \mathbb{R}\rightarrow \mathbb{E}^{3}$ be a non null
speed curve. The arc-length parameter $s$ of a curve $\alpha $ is determined
such that $\left \vert \alpha ^{\prime }(s)\right \vert =\left \vert
T(s)\right \vert =1$. We define the curvature function of $\alpha $ by $%
\kappa (s)=\left \vert T^{\prime }(s)\right \vert .$ If $\kappa (s)\neq 0,$
then the unit principal normal vector $N(s)$ of the curve $\alpha $ at $s$
is given by $\alpha ^{\prime \prime }(s)=T^{\prime }(s)=\kappa (s)N(s).$ The
binormal vector is $B(s)=T(s)\times N(s)$ (the symbol $\times $ is vector
product).

The Frenet-Serret formulas are%
\begin{equation}
T^{\prime }=\kappa N;\  \ N^{\prime }=-\kappa T+\tau B;\  \ B^{\prime }=-\tau N
\label{1}
\end{equation}%
where the function $\tau (s)$ is the torsion function of $\alpha $ at $s$. A
curve is called a \textit{twisted} curve if has non zero curvature and
torsion. The planes spanned by $\{T,N\}$, $\{T,B\}$, and $\{N,B\}$ are
called the \textit{osculating plane}, \textit{the rectifying plane}, and
\textit{the normal plane}, respectively.

We keep the name helix for a curve $\alpha $ in $\mathbb{E}^{3}$ if its
tangent vector $T$ makes a constant angle with a fixed direction $X$ called
also the axis. The vector $X$ along helix curve lies in the rectifying
plane, it can be given by
\begin{equation}
X=\cos \theta \ T+\sin \theta \ B  \label{2}
\end{equation}%
here $\theta $ is a constant angle different from $\frac{\pi }{2}$ (see \cite%
{bm}).\newline
In \cite{Cb}, the author introduce a rectifying curves, as space curves $%
\alpha $ whose position vector always lie in its rectifying plane. The
position vector $\alpha (s)$ of a rectifying curve satisfies%
\begin{equation}
\alpha (s)=(s+a)T+bB  \label{3}
\end{equation}%
where $a$ and $b$ are some real constants.\newline
In the terms of curvature and torsion, a curve $\alpha $ is a general helix
and congruent to a rectifying curve if and only if the ratio of torsion to
curvature is
\begin{equation}
\frac{\tau }{\kappa }=c_{1}\text{ (constant)}  \label{2.1}
\end{equation}%
and%
\begin{equation}
\frac{\tau }{\kappa }=c_{2}s+c_{3}\text{ (linear function)},  \label{3.1}
\end{equation}%
respectively, where $c_{1,2}$ is non null constants and $c_{3}$ is a
constant.\newline
Therefore, the rectifying plane of a curve $\alpha $ play an important role
to this two classes of curves (i.e. helix and rectifying).

Motivated by above definitions of helix and rectifying curves given in Eq.(%
\ref{2} and \ref{3}), and their characterizations in the terms of curvature
and torsion given in Eq.(\ref{2.1} and \ref{3.1}), it is natural to ask the
following geometric question: Is there a class of curves generalizing the
classes of helix and rectifying curves?

Firstly, let's define a new vector $\alpha _{f}$ that we call'it $f$\textit{%
-position vector }of the curve\textit{\ }$\alpha $ by
\begin{equation*}
\alpha _{f}(s)=\int f(s)T(s)ds
\end{equation*}%
where $f$ is an integrable function. By a simple calculate, for $f\equiv 0$
we find the right side of Eq.(\ref{2}) and for non null constant function $%
f, $ we get, up to parametrization, right side of the Eq.(\ref{3}).\newline
When the $f$\textit{-position vector }$\alpha _{f}$ lie in the rectifying
plane of $\alpha $ i.e.%
\begin{equation}
\alpha _{f}(s)=\int f(s)T(s)ds=\lambda (s)T+\mu (s)B  \label{6}
\end{equation}%
for $f(s)\equiv 0$ and $f(s)\equiv f$ (constant), we find helix and
rectifying definitions, respectively, where $\lambda $ and $\mu $ are some
functions. The generalization in the terms of curvature and the torsion
functions of the Eqs(\ref{2.1} and \ref{3.1}) will be presented in the
Theorem \ref{T2}.

\bigskip

Now, we are able to introduce the following definition about such curves.

\begin{definition}
\label{D1}Let $\alpha :I\subset \mathbb{R}\rightarrow \mathbb{E}^{3}$ be a
curve with Frenet apparatus $\{T,N,B,\kappa ,\tau \}$ and $f$ be an
integrable function in parameter $s.$ We call the curve $\alpha $ a $f$%
-rectifying curve if its $f$\textit{-position vector }$\alpha _{f}$ lie
always in the rectifying plane of $\alpha $ i.e.%
\begin{equation*}
\alpha _{f}(s)=\lambda (s)T+\mu (s)B
\end{equation*}%
where $\lambda (s)$ and $\mu (s)$ are some real functions$.$
\end{definition}

Consequently, the Definition \ref{D1} coincides with helix curve or
rectifying curve definitions when the function $f$ is a null or a constant,
respectively.\newline
Hence, when the function $f$ varies in the set of all integrable functions,
the $f$-position vector $\alpha _{f}$ give an enlarged determination of $%
\alpha $ and the class of $f$-rectifying curves present a generalization of
helix and rectifying space curves.

\bigskip

The paper is organized as follow;\newline
In the section 2, we give characterizations of $f$-rectifying curves by the
Theorem \ref{T1}. In section 3, we prove that a twisted curve is congruent
to a $f$-rectifying curve if and only if the ratio $\tau /\kappa $ is the
primitive function $F$ of $f.$ We also give a physical signification of $f$%
-rectifying curves in mechanics terms. The end section is devoted to the
determination explicitly of all $f$-rectifying curves and we close this
study with some examples.

\section{Characterization of $f$-rectifying curves}

For the characterizations of $f$-rectifying curves, we have the following
theorem

\begin{theorem}
\label{T1}Let $\alpha :I\subset \mathbb{R}\rightarrow \mathbb{E}^{3}$ be a $%
f $-rectifying ($f$ is nonzero function) curve with strictly positive
curvature function and $s$ be its arclength. Then\newline
\textbf{1.} The norm function $\rho (s)=\left \vert \alpha
_{f}(s)\right
\vert $ satisfies%
\begin{equation*}
\rho (s)=\sqrt{F^{2}(s)+c^{2}}
\end{equation*}%
where $F$ is the primitive function of $f$ and $c$ is a non null constant.%
\newline
\textbf{2.} The tangential component of the $f$-position vector $\alpha _{f}$
is
\begin{equation*}
\left \langle \alpha _{f},T\right \rangle =F(s)
\end{equation*}%
\textbf{3.} The normal component of the $f$-position vector $\alpha _{f}$
has constant length.\newline
\textbf{4.} The torsion $\tau $ is nonzero, and the binormal component of
the $f$-position vector $<\alpha _{f},B>$\ is constant.\newline
Conversely, if $\alpha :I\subset \mathbb{R}\rightarrow \mathbb{E}^{3}$ is a
curve with a positive curvature $\kappa $ and if one of the assertions
\textbf{1}, \textbf{2}, \textbf{3} or \textbf{4} holds, then $\alpha $ is a $%
f$-rectifying curve.
\end{theorem}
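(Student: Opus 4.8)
The plan is to exploit the single structural fact that differentiating the $f$-position vector returns a tangential vector scaled by $f$. Since $\alpha$ is unit speed we have $d\alpha=T\,ds$, so from $\alpha_f(s)=\int f(s)\,d\alpha$ one gets
\begin{equation*}
\alpha_f'(s)=f(s)\,T(s).
\end{equation*}
This is the only place the definition of $\alpha_f$ enters; everything else is bookkeeping with the Frenet--Serret formulas.

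Next I would differentiate the defining relation $\alpha_f=\lambda T+\mu B$ of Eq.(\ref{6.1}) and substitute Eq.(\ref{1}):
\begin{equation*}
\alpha_f'=\lambda' T+(\lambda\kappa-\mu\tau)N+\mu' B.
\end{equation*}
Equating this with $fT$ and comparing the three orthonormal Frenet components yields the system $\lambda'=f$, $\;\lambda\kappa-\mu\tau=0$, and $\mu'=0$. From here the four assertions follow directly. From $\mu'=0$ the function $\mu$ is a constant $c$; since $\langle\alpha_f,B\rangle=\mu=c$ this is the binormal part of statement 4, and since the component of $\alpha_f$ orthogonal to $T$ equals $\alpha_f-\langle\alpha_f,T\rangle T=\mu B=cB$, whose length is $|c|$, this gives statement 3. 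From $\lambda'=f$ the tangential component $\langle\alpha_f,T\rangle=\lambda$ is a primitive $F$ of $f$, which is statement 2; and because $T,B$ are orthonormal, $\rho^2=|\alpha_f|^2=\lambda^2+\mu^2=F^2+c^2$, giving statement 1 once $c\neq0$ is known.

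The two points that actually require the hypotheses rather than pure algebra are that the constant $c$ is non null and that $\tau$ is not identically zero; these are where $f\not\equiv0$ and $\kappa>0$ enter, and I expect this to be the only delicate step. For $c\neq0$: if $c=0$ then $\alpha_f=FT$ and the $N$-component equation becomes $F\kappa=0$, so $F\equiv0$ by $\kappa>0$, whence $f=F'\equiv0$, contradicting that $f$ is nonzero; thus $c$ is a non null constant, completing statement 1. For the torsion, the $N$-component equation rewrites as $F\kappa=c\tau$, so $\tau=F\kappa/c$; since $f\not\equiv0$ the primitive $F$ is non-constant and hence nonzero on some subinterval, forcing $\tau\not\equiv0$, which is the remaining half of statement 4.
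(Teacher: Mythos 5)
Your proposal is correct and follows essentially the same route as the paper: differentiate $\alpha_f=\lambda T+\mu B$, use $\alpha_f'=fT$ and the Frenet formulas to obtain $\lambda'=f$, $\lambda\kappa=\mu\tau$, $\mu'=0$, and read off the four assertions. If anything, your direct computation $\rho^2=\lambda^2+\mu^2$ and your contradiction argument for $\mu=c\neq 0$ are cleaner than the paper's differentiation of $\rho^2$ and its terse ``because $0$ is in $I$'' justification.
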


\begin{proof}
Let $\alpha :I\subset \mathbb{R}\rightarrow \mathbb{E}^{3}$ be a $f$%
-rectifying curve parameterized by arclength $s,$ we suppose that $\alpha $
is non helix curve (i.e. $f$ nonzero function). From the Definition \ref{D1}%
, we have%
\begin{equation*}
\alpha _{f}(s)=\int f(s)d\alpha =\lambda (s)\ T+\mu (s)\ B
\end{equation*}%
Differentiating the Eq.(\ref{6}) with respect to $s$ and using the Frenet
formulas Eq.(\ref{1}), we get%
\begin{equation*}
f(s)T=\lambda ^{\prime }(s)\ T+\left( \lambda (s)\kappa -\mu (s)\tau \right)
N+\mu ^{\prime }(s)\ B
\end{equation*}%
by comparing, we have%
\begin{equation}
\left \{
\begin{array}{l}
\lambda (s)=\int f(s)ds=F(s) \\
\dfrac{\tau }{\kappa }=\dfrac{\lambda (s)}{\mu (s)}, \\
\mu (s)=\mu \text{ non null constant}%
\end{array}%
\right.  \label{9}
\end{equation}%
\textbf{1.} We have for the norm function%
\begin{eqnarray}
\rho ^{2\prime }(s) &=&2\left \langle \alpha _{f}(s),f(s)T\right \rangle
=2\left \langle F(s)T+\mu B,f(s)T\right \rangle  \label{9.1} \\
&=&2F(s)f(s)=F^{2^{\prime }}(s)\  \text{and}  \notag \\
\rho ^{2}(s) &=&F^{2}(s)+\bar{c}
\end{eqnarray}%
Because $0$ is in $I$, we must have $\bar{c}>0$, then%
\begin{equation*}
\rho (s)=\sqrt{F^{2}(s)+c^{2}}
\end{equation*}%
where $\bar{c}=c^{2}$.\newline
\textbf{2.} It's a direct consequence from the equations Eq.(\ref{6}) and
Eq.(\ref{9})$_{(1)}$.\newline
\textbf{3.} Let us put $\alpha _{f}(s)=m(s)T(s)+\alpha _{f}^{N}(s)$, where $%
m(s)$ is arbitrary differentiable function. Comparing with the Eq. (\ref{6}%
), we conclude that $\alpha _{f}^{N}(s)=\mu B,$ and $\left \langle \alpha
_{f}^{N}(s),B\right \rangle =\mu $ is a constant from Eq.(\ref{9})$_{(3)}$
then this yields assertion (3).\newline
\textbf{4.} We can easily get (4) from Eq.(\ref{9})$_{(1,2)}$ and the fact
that $\kappa >0$.\newline
Conversely, \newline
Suppose that the assertions (1) or (2) holds. Then we have $\left \langle
\alpha _{f}(s),T\right \rangle =F(s)$ and by taking the derivative of the
last equation with respect to $s$, we get $\kappa \left \langle \alpha
_{f}(s),N\right \rangle =0$. Taking account that $\kappa >0$, we have $%
\left
\langle \alpha _{f},N\right \rangle =0,$ i.e $\alpha $ is $f$%
-rectifying curve.\newline
If the assertion (3) holds, from the Eq.(\ref{9.1}), we have%
\begin{equation*}
\left \langle \alpha _{f}(s),\alpha _{f}(s)\right \rangle =\left \langle
\alpha _{f},T\right \rangle ^{2}+c^{2}
\end{equation*}%
by differentiating the last equation with respect to $s$ gives%
\begin{eqnarray*}
2\left \langle \alpha _{f}(s),f(s)T\right \rangle &=&2\left \langle \alpha
_{f},T\right \rangle \left( \left \langle f(s)T,T\right \rangle +\kappa
\left \langle \alpha _{f}(s),N\right \rangle \right) \\
\left \langle \alpha _{f}(s),f(s)T\right \rangle &=&\left \langle \alpha
_{f}(s),f(s)T\right \rangle \left( 1+\frac{\kappa }{f(s)}\left \langle
\alpha _{f}(s),N\right \rangle \right)
\end{eqnarray*}%
Since $\kappa >0,$ $f(s)\neq 0$ and the norm function $\left \vert \alpha
_{f}(s)\right \vert $ is non constant function then $\left \langle \alpha
_{f}(s),N\right \rangle =0$ i.e $\alpha $ is $f$-rectifying curve.\newline
For assertion $(4)$, using the Eq.(\ref{1}), we can easily get the result.
\end{proof}

\section{Helix, rectifying curves compared to the $f$-rectifying curves}

From \cite{bm} and \cite{Cb}, any twisted curve $\alpha $ is helix if and
only if the ratio $\frac{\tau }{\kappa }$ is a nonzero constant, and it is
congruent to a rectifying curve if and only if the ratio $\left( \frac{\tau
}{\kappa }\right) ^{\prime }$ is a non null constant. How about the $f$%
-rectifying curve case?

The characterization in the terms of the ratio $\frac{\tau }{\kappa }$ is
given in the following theorem.

\begin{theorem}
\label{T2}Let $\alpha :I\subset \mathbb{R}\rightarrow \mathbb{E}^{3}$ be a
curve with strictly positive curvature $\kappa $. The curve $\alpha $ is
congruent to a $f$-rectifying curve if and only if the ratio of torsion and
curvature of the curve is%
\begin{equation*}
\frac{\tau }{\kappa }=\overline{\mu }F(s)
\end{equation*}%
where $F$ is the primitive of $f$ and $\overline{\mu }$\ is non null
constant.\newline
Moreover, if \newline
\textbf{i.} $f\equiv 0,$ we have the helix condition for $\alpha $, i.e. $%
\frac{\tau }{\kappa }$ is non null constant$,$\newline
\textbf{ii.} $f$ is a non null constant function$,$ $\alpha $ is congruent
to a rectifying curve, i.e. $\left( \frac{\tau }{\kappa }\right) ^{\prime }$
is non null constant,\newline
\textbf{iii.} $f$ is a $n$-degree polynomial, then $\alpha $ has a
characterization $\left( \frac{\tau }{\kappa }\right) ^{(n+1)}$ is non null
constant.
\end{theorem}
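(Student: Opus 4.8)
The plan is to prove Theorem \ref{T2} by combining the structural equations already established in Theorem \ref{T1} with the Frenet formulas, treating the two directions of the equivalence separately and then deducing the three special cases as corollaries of the main characterization.

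First I would establish the forward direction. Assuming $\alpha$ is congruent to an $f$-rectifying curve, I can invoke Theorem \ref{T1} directly: its proof already derived the system Eq.(\ref{9}), and the second line of that system reads $\frac{\tau}{\kappa} = \frac{\lambda(s)}{\mu(s)}$. Since Eq.(\ref{9})$_{(1)}$ gives $\lambda(s) = F(s)$ and Eq.(\ref{9})$_{(3)}$ gives $\mu(s) = \mu$ constant, I immediately obtain $\frac{\tau}{\kappa} = \frac{1}{\mu}F(s)$, which is the claimed formula with $\overline{\mu} = \frac{1}{\mu}$ a non-null constant. This direction is essentially bookkeeping once Theorem \ref{T1} is in hand.

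The converse is where the actual work lies, and I expect it to be the main obstacle. Here I would start from the hypothesis $\frac{\tau}{\kappa} = \overline{\mu}F(s)$ and must construct (or verify) that $\alpha$ is $f$-rectifying, i.e.\ that $\left\langle \alpha_f, N\right\rangle = 0$. The natural approach is to define the candidate decomposition $\alpha_f(s) = F(s)\,T + \frac{1}{\overline{\mu}}\,B$ and check that its derivative equals $f(s)T$. Differentiating using the Frenet formulas Eq.(\ref{1}) yields
\begin{equation*}
\alpha_f'(s) = F'(s)T + F(s)\kappa N + \tfrac{1}{\overline{\mu}}(-\tau N) = f(s)T + \left(F(s)\kappa - \tfrac{\tau}{\overline{\mu}}\right)N,
\end{equation*}
and the coefficient of $N$ vanishes precisely because the hypothesis $\tau = \overline{\mu}F\kappa$ forces $F\kappa - \frac{\tau}{\overline{\mu}} = 0$. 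Thus $\alpha_f' = f(s)T$, so $\alpha_f = \int f(s)\,d\alpha$ up to an additive constant vector, and the decomposition shows $\alpha_f$ lies in the rectifying plane. The delicate point is the phrase \emph{congruent to}: I should note that the construction recovers $\alpha$ only up to a rigid motion and an integration constant, so I would argue that the fundamental theorem of space curves guarantees a curve with the prescribed $\kappa, \tau$ exists and is unique up to congruence, and that this curve is genuinely $f$-rectifying.

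Finally, the three special cases follow by specializing the primitive $F$. If $f\equiv 0$ then $F$ is constant, so $\frac{\tau}{\kappa} = \overline{\mu}F$ is a non-null constant, recovering the helix condition Eq.(\ref{2.1}). If $f$ is a non-null constant then $F(s) = fs + \text{const}$ is linear, so $\left(\frac{\tau}{\kappa}\right)'$ is a non-null constant, recovering the rectifying condition Eq.(\ref{3.1}). If $f$ is an $n$-degree polynomial then $F$ has degree $n+1$, and differentiating $n+1$ times kills all but the leading term, so $\left(\frac{\tau}{\kappa}\right)^{(n+1)} = \overline{\mu}F^{(n+1)}$ is a non-null constant; here I would remark that $F^{(n+1)}$ equals $\overline{\mu}$ times $(n+1)!$ times the leading coefficient of $F$, hence nonzero. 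Each case is a short computation once the general formula is secured.
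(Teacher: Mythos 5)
Your proposal is correct and follows essentially the same route as the paper: the forward direction reads off $\tau/\kappa=\overline{\mu}F$ from the system Eq.(\ref{9}), the converse verifies that $\frac{d}{ds}\bigl(\alpha_f-F(s)T-\frac{1}{\overline{\mu}}B\bigr)=0$ via the Frenet formulas, and the three special cases follow by substituting the corresponding $F$. Your added remarks on the integration constant and the fundamental theorem of curves only make explicit what the paper leaves implicit in the word ``congruent.''
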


\begin{proof}
Let $\alpha :I\subset \mathbb{R}\rightarrow \mathbb{E}^{3}$ be a curve with
strictly positive curvature $\kappa .$\newline
If $\alpha $ is $f$-rectifying curve and using The Eq.(\ref{9}) then%
\begin{equation*}
\frac{\tau }{\kappa }=\frac{\lambda (s)}{\mu }=\frac{1}{\mu }F(s)=\overline{%
\mu }F(s)
\end{equation*}%
Hence, the ratio of torsion and curvature of the curve $\alpha $ satisfied
the assertions $(i)$, $(ii)$ and $(iii)$ according to the values of the
function $f$.\newline
Conversely, $\alpha :I\subset \mathbb{R}\rightarrow \mathbb{E}^{3}$ be a
curve with positive curvature $\kappa $ such that
\begin{equation*}
\frac{\tau }{\kappa }=\overline{\mu }F(s)
\end{equation*}%
by using the Frenet-Serret equations given in Eq.(\ref{1}), we get%
\begin{equation*}
\frac{d}{ds}(\int f(s)d\alpha -F(s)\ T-\frac{1}{\overline{\mu }}\ B)=0
\end{equation*}%
which conclude that $\alpha $ is congruent to a $f$-rectifying curve.
\end{proof}

\subsection{Physical interpretation}

In mechanics terms and from \cite{Cb}, up to rigid motions, the general
helix and the rectifying curves are characterized as those curves that are
in equilibrium under the action of the force field
\begin{equation*}
\mathbf{F}=f(s)T-\tau N
\end{equation*}%
for $f(s)=0$ and nonzero constant $f(s)=f,$ respectively.\newline
If $f(s)$ is non constant function and using the Theorem \ref{T2}, the curve
is not rectifying curve.

In the case when $f$ is non constant function, the $f$-rectifying curves are
characterized as those curves that are in equilibrium under the action of
the force field $\mathbf{F}$ for non rigid motions.

Now, up to rigid motions, for the action of the force field $\mathbf{\bar{F}}%
=-\frac{\tau }{F}N$ where $F$ is the primitive function of $f$, here the
curve is in equilibrium when it is $f$-rectifying curve. ( For more detail
for equilibrium curves definition, see \cite{ap} and \cite{ay})

\section{Classification of $f$-rectifying curves}

We determine in the following theorem explicitly all $f$-rectifying curves
where $f$ is non null function by dilating a vector $Y$ in $\mathbb{S}^{2}$,
with a distance function $\rho .$

\begin{theorem}
\label{T3}Let $\alpha :I\subset \mathbb{R}\rightarrow \mathbb{E}^{3}$ be a
curve with $\kappa >0$ and $f$ be an integrable non null function. Then $%
\alpha $ is a $f$-rectifying curve if and only if, up to parametrization, it
is given by%
\begin{equation}
\alpha (t)=\tfrac{c\sec (t+t_{0})}{f(F^{-1}(c\tan (t+t_{0})))}Y(t)-c^{2}\int
f^{^{\prime }}(F^{-1}(c\tan (t+t_{0})))\left( \tfrac{\sec (t+t_{0})}{%
f(F^{-1}(c\tan (t+t_{0})))}\right) ^{3}Y(t)dt  \label{10}
\end{equation}%
where $c$ is a strictly positive number, $F$ a primitive function of $f$
with $F(0)=c\tan t_{0},$ and $Y(t)$ is a curve in $\mathbb{S}^{2}$ and not
an arc of the great circle.
\end{theorem}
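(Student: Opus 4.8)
The plan is to prove Theorem~\ref{T3} by establishing both directions through an explicit reconstruction of the curve from its $f$-position vector, exploiting the characterizations already proved in Theorem~\ref{T1}. The key structural fact is that if $\alpha$ is $f$-rectifying, then by Theorem~\ref{T1} we have $\alpha_f(s)=F(s)T+\mu B$ with $\mu$ a nonzero constant and $\rho(s)=\sqrt{F^2(s)+\mu^2}$. Rather than working with the arclength $s$ directly, I would introduce a new parameter $t$ that linearizes the geometry of the rectifying plane. Specifically, since the tangential component is $F(s)$ and the binormal component is the constant $\mu$, the natural substitution is $F(s)=c\tan(t+t_0)$ (equivalently, defining $t$ so that $\tan(t+t_0)=F(s)/c$ with $c=\mu$ up to normalization), which makes $\rho=c\sec(t+t_0)$ and turns the angle in the rectifying plane into a linear function of $t$. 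This is exactly the reparametrization that appears in the statement, so the first step is to justify that $F$ is invertible on the relevant interval (using $f\neq 0$ and $\kappa>0$ so that $F$ is strictly monotone) and to record the change-of-variables relations $ds/dt$ in terms of $f$, $F$, and the secant factor.

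The second step is to identify the unit spherical curve $Y$. The idea, following the classical treatment of rectifying curves, is that a rectifying curve is (up to the scaling by $\rho$) a cone over a unit-speed spherical curve: writing $\alpha_f(s)=\rho(s)\,Z(s)$ where $Z=\alpha_f/\rho$ is a unit vector field, one checks that $Z$ traces a curve on $\mathbb{S}^2$ and that reparametrizing by $t$ makes it unit speed, yielding $Y(t)$. Concretely I would set $Y(t)=\alpha_f/|\alpha_f|$ expressed in the $t$-parameter, verify $|Y|=1$, and compute $|dY/dt|=1$ using the Frenet equations together with $\tau/\kappa=\overline{\mu}F(s)$ from Theorem~\ref{T2}. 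Once $Y$ is in hand, the curve $\alpha$ itself must be recovered from $\alpha_f$ by inverting the defining relation $\alpha_f=\int f\,d\alpha$, i.e.\ $d\alpha=(1/f)\,d\alpha_f$, so that $\alpha(t)=\int \frac{1}{f}\,\frac{d\alpha_f}{dt}\,dt$. Substituting $\alpha_f=\rho\, Y=c\sec(t+t_0)Y(t)$ and differentiating by the product rule produces two terms — one proportional to $Y$ and one to $dY/dt$ — and after re-expressing $dY/dt$ in terms of $Y$ (using that $Y$ lies on the unit sphere together with the relation between $s$ and $t$) and carrying out the integration by parts, the expression collapses to the claimed formula~\eqref{10} with the $\sec/f$ and the $f'\,(\sec/f)^3$ terms.

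For the converse, I would start from a prescribed unit-speed $Y\in\mathbb{S}^2$ and define $\alpha$ by the right-hand side of~\eqref{10}, then verify directly that the associated $f$-position vector $\alpha_f=\int f\,d\alpha$ equals $c\sec(t+t_0)Y(t)$, whose norm is $c\sec(t+t_0)=\sqrt{F^2+c^2}$; by Theorem~\ref{T1}(1) (the norm characterization) this forces $\alpha$ to be $f$-rectifying. Alternatively one can check that $\langle \alpha_f,N\rangle=0$ directly.

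I expect the main obstacle to be the bookkeeping in the reparametrization and the integration-by-parts that produces the second term in~\eqref{10}: one must carefully track how $ds$, $dt$, and the factor $1/f(F^{-1}(c\tan(t+t_0)))$ interact, and verify that the coefficient $c\sec/f$ differentiates to produce precisely the $-c^2 f'(\sec/f)^3$ term after reintegrating $d\alpha=(1/f)d\alpha_f$. The subtlety is that $f$ is evaluated along the curve, so $f'$ denotes differentiation of $f$ as a function of its argument composed with $F^{-1}(c\tan(t+t_0))$, and keeping the chain rule consistent there (rather than conflating $d/ds$ and $d/dt$) is where the verification is most error-prone. The geometric steps (invertibility of $F$, sphericity and unit-speed of $Y$) are comparatively routine given Theorems~\ref{T1} and~\ref{T2}.
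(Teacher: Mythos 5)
Your proposal follows essentially the same route as the paper: normalize $\alpha_f$ to get a spherical curve $Y=\alpha_f/\rho$, reparametrize via $F(s)=c\tan(t+t_0)$ so that $\alpha_f=c\sec(t+t_0)Y(t)$ with $Y$ unit speed, recover $\alpha$ from $d\alpha=(1/f)\,d\alpha_f$ by integration by parts, and for the converse verify an assertion of Theorem~\ref{T1} (the paper checks that the normal component has constant length, assertion (3), rather than the norm condition (1), but this is a cosmetic difference). The approach and all key steps match the paper's proof.
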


\begin{proof}
Let $\alpha :I\subset \mathbb{R}\rightarrow \mathbb{E}^{3}$ be a $f$%
-rectifying, without loss of generality, we suppose that $\alpha $ is a unit
speed curve with $\kappa >0$ and let us define a unit vector by%
\begin{equation}
Y(s)=\frac{\alpha _{f}(s)}{\rho (s)}  \label{10.1}
\end{equation}%
Using the Theorem \ref{T1} (1), we have%
\begin{equation}
\alpha _{f}(s)=\int f(s)d\alpha =\int f(s)T(s)ds=\sqrt{F^{2}(s)+c^{2}}Y(s)
\label{11}
\end{equation}%
by derivating the Eq.(\ref{11}) with respect to $s$ and taking account that $%
Y(s)$ is orthogonal to $Y^{\prime }(s)$, we get%
\begin{equation*}
f(s)T(s)=\frac{f(s)F(s)}{\sqrt{F^{2}(s)+c^{2}}}Y(s)+\sqrt{F^{2}(s)+c^{2}}%
Y^{\prime }(s)
\end{equation*}%
and%
\begin{equation}
f^{2}(s)=\frac{f^{2}(s)F^{2}(s)}{F^{2}(s)+c^{2}}+\left(
F^{2}(s)+c^{2}\right) r^{2}  \label{12}
\end{equation}%
where $r$ denote the norm of $Y^{\prime }$, we suppose that $c$ and $f(s)$
have a similar sign, then the Eq.(\ref{12}) turns to%
\begin{equation}
r=\frac{cf(s)}{F^{2}(s)+c^{2}}  \label{13}
\end{equation}%
Let us put%
\begin{equation*}
t=\int_{0}^{s}\frac{cf(u)}{F^{2}(u)+c^{2}}du=\arctan \left( \frac{F(s)}{c}%
\right) -\arctan \left( \frac{F(0)}{c}\right)
\end{equation*}%
then%
\begin{equation}
s=F^{-1}(c\tan (t+t_{0}))  \label{14}
\end{equation}%
where $t_{0}=\arctan \left( \frac{F(0)}{c}\right) .$\newline
Substituting the Eq.(\ref{14}) in the Eq.(\ref{11}), we have
\begin{equation}
\alpha _{f}(t)=c\sec (t+t_{0})Y(t)  \label{15}
\end{equation}%
and%
\begin{equation}
f(F^{-1}\left( c\tan (t+t_{0})\right) )T(t)=c\left( \sec (t+t_{0})\
Y(t)\right) _{t}^{\prime }  \label{15.1}
\end{equation}%
by integration by parts, we get finally%
\begin{eqnarray}
\alpha (t) &=&c\int \tfrac{\left( \sec (t+t_{0})\ Y(t)\right) ^{\prime }}{%
f(F^{-1}(c\tan (t+t_{0})))}dt  \label{15.2} \\
&=&\tfrac{c\sec (t+t_{0})}{f(F^{-1}(c\tan (t+t_{0})))}Y(t)-c\int \left( %
\left[ f(F^{-1}(c\tan (t+t_{0})))\right] ^{-1}\right) ^{\prime }\sec
(t+t_{0})\ Y(t)dt  \notag \\
&=&\tfrac{c\sec (t+t_{0})}{f(F^{-1}(c\tan (t+t_{0})))}Y(t)-c^{2}\int
f^{^{\prime }}(F^{-1}(c\tan (t+t_{0})))\left( \tfrac{\sec (t+t_{0})}{%
f(F^{-1}(c\tan (t+t_{0})))}\right) ^{3}Y(t)dt  \notag
\end{eqnarray}%
Now, let calculate the curvature function $\kappa $ of $\alpha .$\newline
The frame $\left \{ Y,Y^{\prime },Y\times Y^{\prime }\right \} $ are an
orthonormal frame in $\mathbb{E}^{3}$ of unit speed curve $Y(t)$. We have
Frenet formulas%
\begin{equation}
\left \{
\begin{array}{ccccc}
Y^{\prime } & = &  & Y^{\prime } &  \\
Y^{\prime \prime } & = & -Y & + & g(t)\ Y\times Y^{\prime } \\
\left( Y\times Y^{\prime }\right) ^{\prime } & = &  & -g(t)Y^{\prime } &
\end{array}%
\right.  \label{15.21}
\end{equation}%
then the unit speed vector and the normal vector of $Y$ are%
\begin{equation*}
t_{Y}=Y^{\prime };\  \ n_{Y}=\frac{-1}{\kappa _{Y}}Y+\frac{g(t)}{\kappa _{Y}}%
Y\times Y^{\prime }
\end{equation*}%
where $\kappa _{Y}$ is the curvature function of $Y,\ $wich give%
\begin{equation}
\kappa _{y}(t)=\sqrt{1+g^{2}(t)}  \label{15.22}
\end{equation}%
The speed curve of $\alpha $ is given by%
\begin{equation*}
\alpha ^{\prime }(t)=\frac{c\sec ^{2}(t+t_{0})}{f(F^{-1}(c\tan (t+t_{0})))}%
\left( \sin (t+t_{0})\ Y(t)+\cos (t+t_{0})\ Y^{\prime }(t)\right)
\end{equation*}%
then the speed $r_{\alpha }$ and the tangent vector $T$ are%
\begin{equation}
r_{\alpha }=\frac{c\sec ^{2}(t+t_{0})}{f(F^{-1}(c\tan (t+t_{0})))};\  \
T=\sin (t+t_{0})\ Y(t)+\cos (t+t_{0})\ Y^{\prime }(t)  \label{15.3}
\end{equation}%
From the the arc-length parameter $s$ of $\alpha $%
\begin{equation*}
\frac{ds}{dt}=\frac{c\sec ^{2}(t+t_{0})}{f(F^{-1}(c\tan (t+t_{0})))}
\end{equation*}%
we have by differentiating $T$ given in Eq.(\ref{15.3}) and Eq.(\ref{15.21})%
\begin{equation*}
\kappa \frac{c\sec ^{2}(t+t_{0})}{f(F^{-1}(c\tan (t+t_{0})))}N=\cos
(t+t_{0})g\left( t\right) y\times y^{\prime }
\end{equation*}%
using Eq.(\ref{15.22}), we get%
\begin{equation*}
\kappa =\frac{1}{c}\cos ^{3}(t+t_{0})\ f(F^{-1}(c\tan (t+t_{0})))\sqrt{%
\kappa _{y}^{2}-1}
\end{equation*}%
for $\kappa >0,$ it is necessary that $\kappa _{y}>1$ which impose that $Y$
is not an arc of the great circle in $\mathbb{S}^{2}.$\newline
Conversely, Let $\alpha $ be a curve defined by Eq.(\ref{10}). The
derivative of $f$-position vector is%
\begin{equation*}
\alpha _{f}^{\prime }=c\sec (t+t_{0})\ (\tan (t+t_{0})\ Y(t)+Y^{\prime }(t))
\end{equation*}%
from the orthogonality of $Y$ and $Y^{\prime }$, we have%
\begin{equation*}
\left \vert \alpha _{f}^{\prime }\right \vert =c\sec ^{2}(t+t_{0})
\end{equation*}%
and%
\begin{equation*}
\left \langle \alpha _{f}^{N},\alpha _{f}^{N}\right \rangle =\rho ^{2}\left(
t\right) -\frac{\left \langle \alpha _{f},\alpha _{f}{}^{\prime }\right
\rangle }{\left \vert \alpha _{f}{}^{\prime }\right \vert ^{2}}=c^{2}
\end{equation*}%
where the norm of the vector $\alpha _{f}$, $\rho $ is given by $\rho \left(
t\right) =c\sec (t+t_{0})$. Then the normal component $\alpha _{f}^{N}$ of
the $f$-position vector has constant length and using the Theorem \ref{T1}, $%
\alpha $ is a $f$-rectifying curve.
\end{proof}

\begin{corollary}
From the Eq.(\ref{15}), if $\alpha $ is $f$-rectifying curve then the curve $%
\alpha _{f}$ is rectifying curve (see \cite{Cb}).
\end{corollary}

\bigskip

We close this section with the following examples.

\begin{example}
If we put $f$ as a non null constant function in Eq.(\ref{10}), we find the
expression of the rectifying curve given in \cite{Cb}.
\end{example}

\begin{example}
Let's take $Y(t)=\frac{1}{\sqrt{2}}\left( \sin \sqrt{2}t,\cos \sqrt{2}%
t,1\right) $ a unit speed curve in $\mathbb{S}^{2}.$\newline
Let $f$ be an integrable function given by $f(t)=\sec ^{2}t$, its primitive
function is$\ F(t)=\tan t$ (here we take $t_{0}=0$ and $c=1$)$,$ with the
inverse $F^{-1}(t)=\arctan t.$\newline
Substituting the values of $f$ and $F$ in Eq.(\ref{10}) and by an
integration calculation, the curve $\alpha $ defined by
\begin{equation*}
\alpha (t)=\left( 2\sin t\cos \sqrt{2}t-\tfrac{1}{\sqrt{2}}\sin \sqrt{2}%
t\cos t,-2\sin t\sin \sqrt{2}t-\tfrac{1}{\sqrt{2}}\cos t\cos \sqrt{2}t,%
\tfrac{3}{\sqrt{2}}\cos t\right)
\end{equation*}%
is a $f$-rectifying curve for $f(t)=\sec ^{2}t$.
\begin{figure}[h]
\centering
\includegraphics[width=2in,height=1.8in]{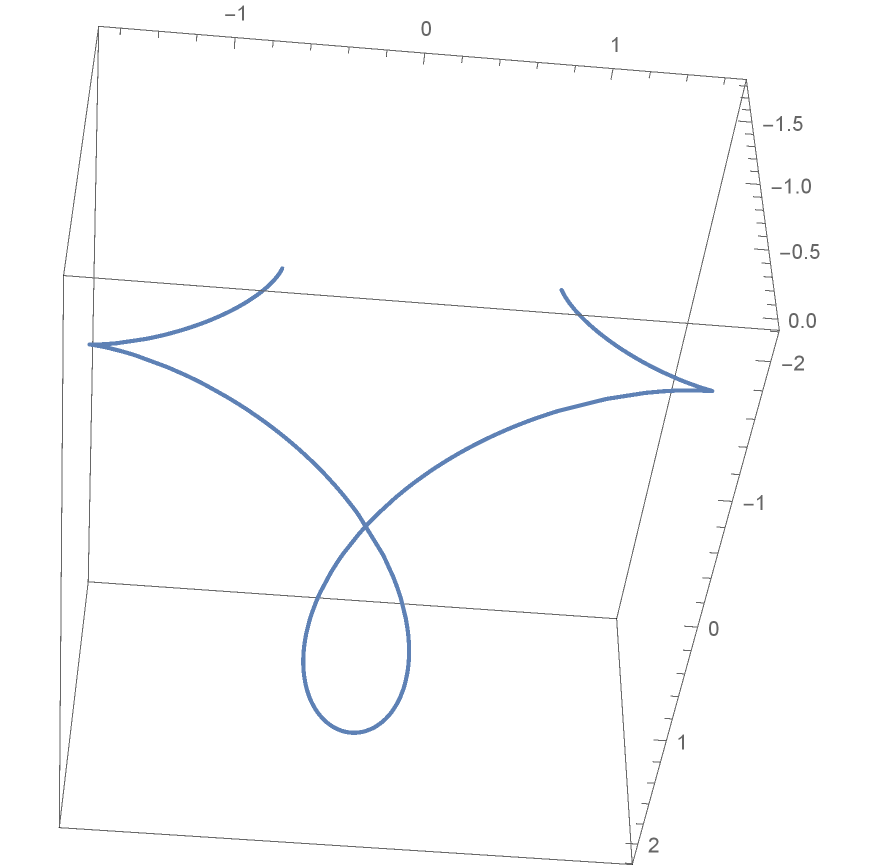} %
\includegraphics[width=2in,height=1.8in]{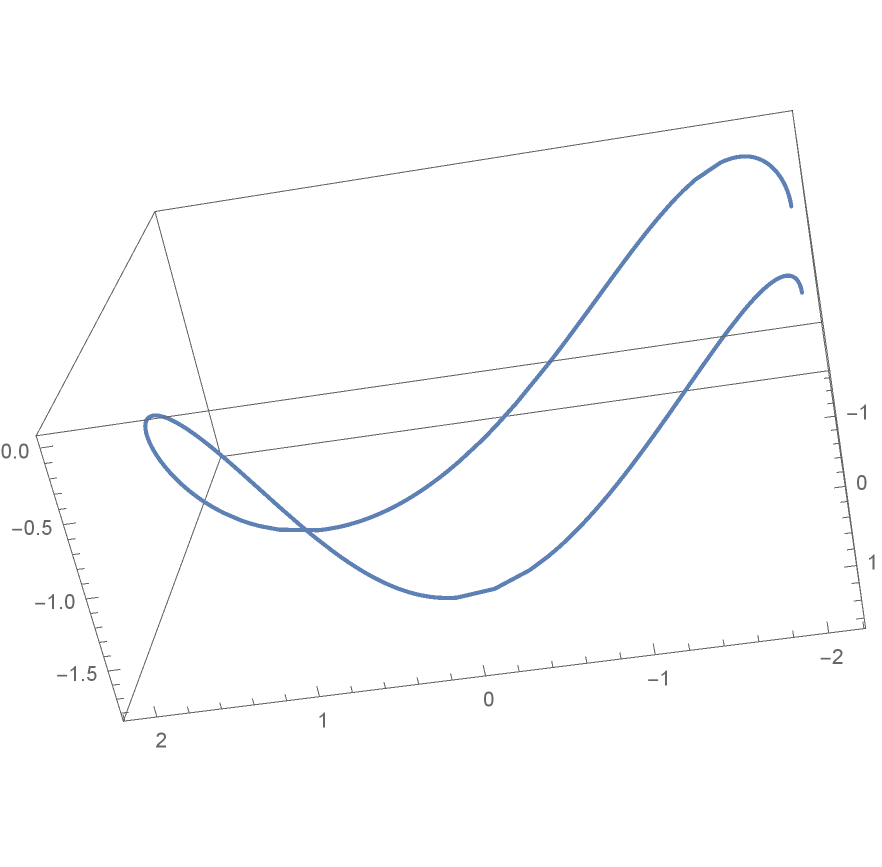}
\caption{\textmd{The $f$-rectifying curve $\protect \alpha $ with $f(t)=\sec
^{2}t$.}}
\end{figure}
\end{example}

\begin{example}
Let's take $Y(t)=\frac{1}{\sqrt{2}}\left( \sin t,\sin t,\sqrt{2}\cos
t\right) \in \mathbb{S}^{2}$ and let $f(t)=2t$ where its primitive function
is$\ F(t)=t^{2}$ (here we take $t_{0}=0$ and $c=1$)$,$ with the inverse $%
F^{-1}(t)=\sqrt{\left \vert t\right \vert }.$\newline
Substituting the values of $f$ and $F$ in Eq.(\ref{10})%
\begin{eqnarray*}
\alpha (t) &=&\tfrac{\sec t}{2\sqrt{\tan t}}Y(t)-\int \left( \frac{1}{2\sqrt{%
\tan t}}\right) ^{\prime }\sec (t)Y(t)dt \\
&=&\left(
\begin{array}{c}
\sqrt{2}\sqrt{\tan t}+\sqrt{\frac{2}{\cot 2t+\sec 2t}} \\
\sqrt{2}\sqrt{\tan t}+\sqrt{\frac{2}{\cot 2t+\sec 2t}} \\
\frac{1}{2}\sqrt{\cot t}-\frac{1}{2}\sqrt{\cot 2t+\sec 2t}%
\end{array}%
\right)
\end{eqnarray*}%
is a $f$-rectifying curve for $f(t)=2t$.
\begin{figure}[h]
\centering
\includegraphics[width=2in,height=1.8in]{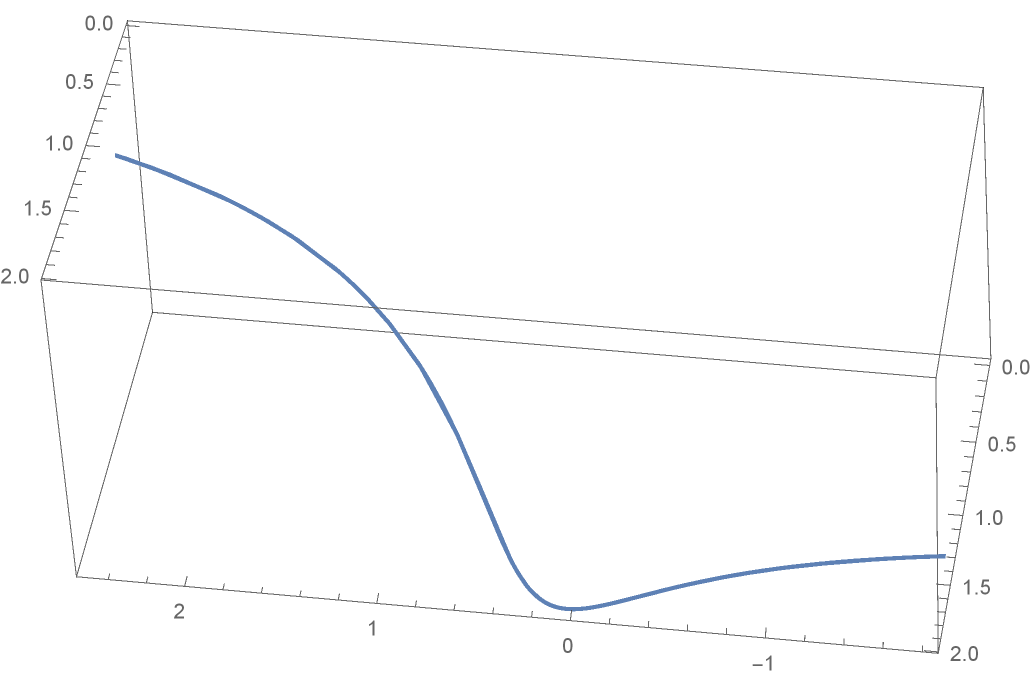} %
\includegraphics[width=2in,height=1.8in]{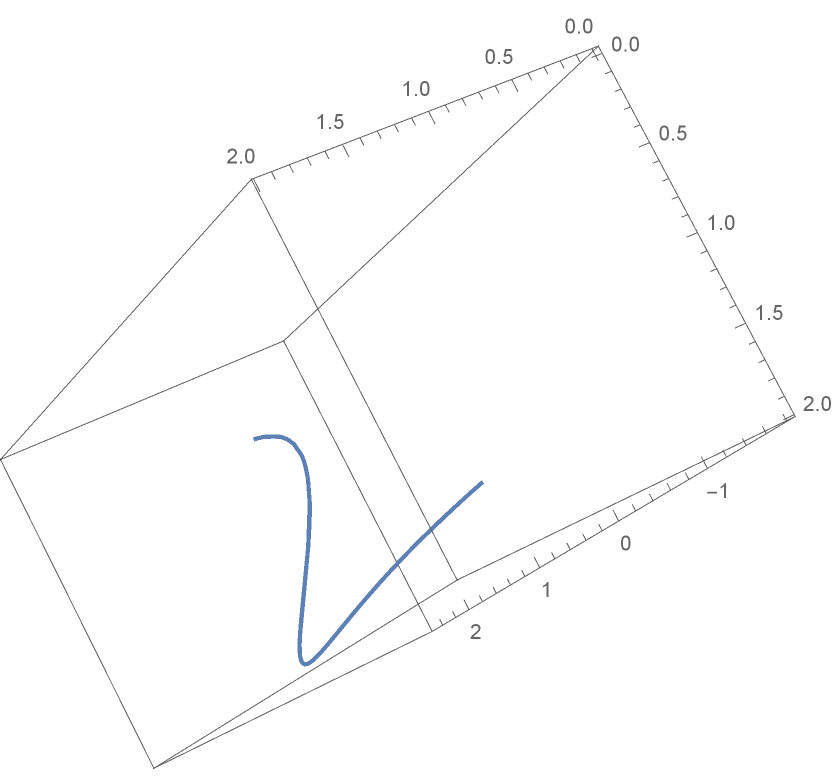}
\caption{\textmd{The $f$-rectifying curve $\protect \alpha $ with $f(t)=2t$.}}
\end{figure}
\end{example}

\begin{example}
For vector $Y(t)=\frac{1}{\sqrt{2}}\left( \sin t,\sin t,\sqrt{2}\cos
t\right) $ in $\mathbb{S}^{2}.$ Let $f$ be an integrable function given by $%
f(t)=e^{t}$, its primitive function is$\ F(t)=e^{t}$ (here we take $c_{1}=0$
and $c=1$)$,$ with the inverse $F^{-1}(t)=\ln \left \vert t\right \vert .$%
\newline
Substituting the values of $f$ and $F$ in Eq.(\ref{10}), the curve $\alpha $%
\begin{eqnarray*}
\alpha (t) &=&\tfrac{\sec t}{\tan t}Y(t)-\int \left( \cot t\right) ^{\prime
}\sec (t)\ Y(t)dt \\
&=&\left(
\begin{array}{c}
\frac{1}{\sqrt{2}}+\frac{1}{2\sqrt{2}}\left( \ln \left( \frac{1-\cos t}{%
1+\cos t}\right) +2\csc t\right) \\
\frac{1}{\sqrt{2}}+\frac{1}{2\sqrt{2}}\left( \ln \left( \frac{1-\cos t}{%
1+\cos t}\right) +2\csc t\right) \\
\cot t-\frac{1}{2}\left( \ln \left( \frac{1-\sin t}{1+\sin t}\right) +2\sec
t\right)%
\end{array}%
\right)
\end{eqnarray*}%
is a $f$-rectifying curve for $f(t)=e^{t}$.
\begin{figure}[h]
\centering
\includegraphics[width=2in,height=1.8in]{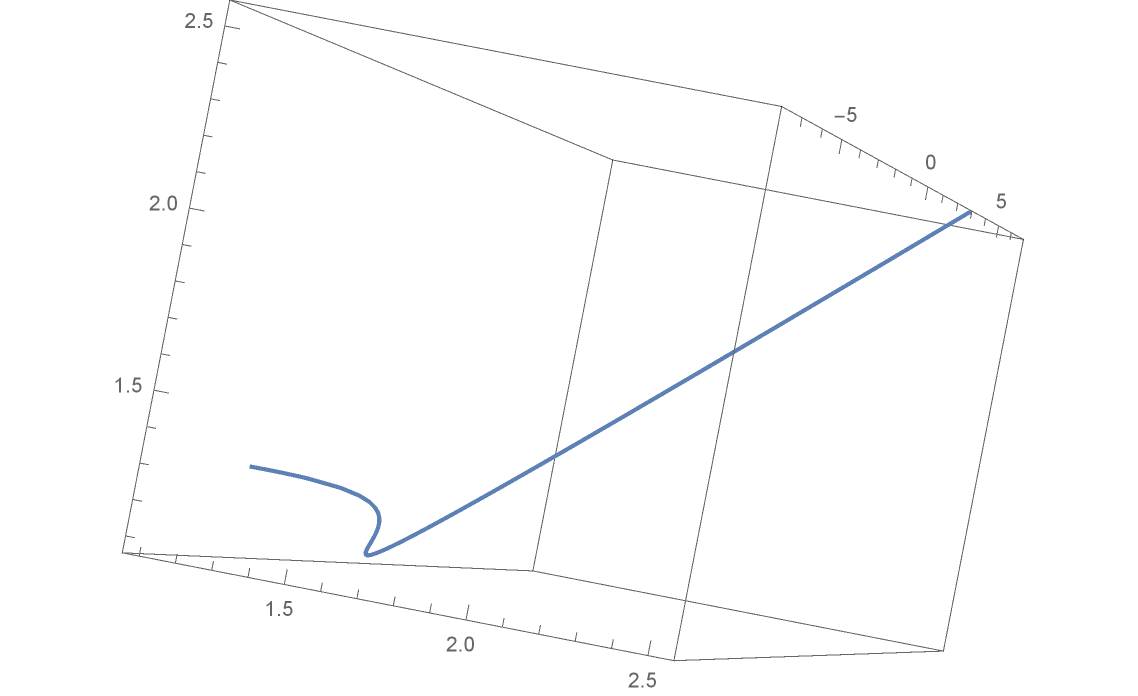} %
\includegraphics[width=2in,height=1.8in]{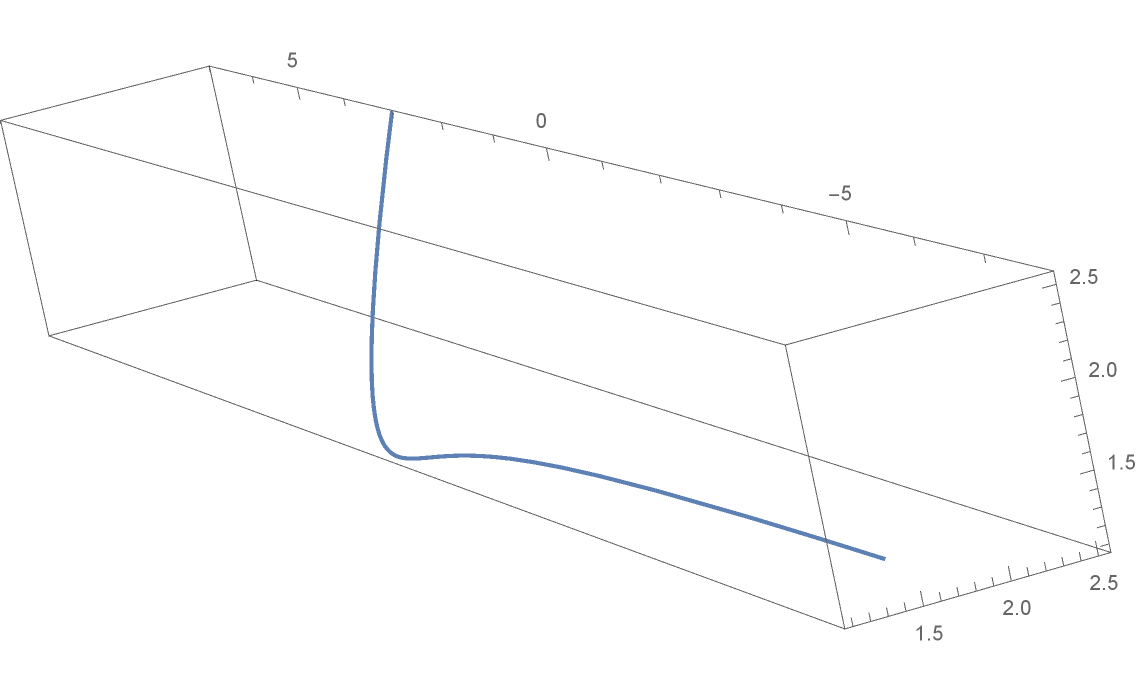}
\caption{\textmd{The $f$-rectifying curve $\protect \alpha $ with $f(t)=e^{t}$%
.}}
\end{figure}
\end{example}

\end{document}